\documentclass[graybox]{svmult}

\usepackage{type1cm}
\usepackage{makeidx}
\usepackage{graphicx}
\usepackage{multicol}
\usepackage[bottom]{footmisc}
\usepackage{newtxtext}       
\usepackage{newtxmath}       
\usepackage{tikz}
\usepackage{pgfplots}
\pgfplotsset{compat=newest}


\begin{document}

\title*{On energy preserving high-order discretizations for nonlinear acoustics}
\author{Herbert Egger and Vsevolod Shashkov}
\institute{Herbert Egger \at TU Darmstadt, \email{egger@mathematik.tu-darmstadt.de}
\and Vsevolod Shashkov \at TU Darmstadt \email{shashkov@mathematik.tu-darmstadt.de}}
\maketitle

\abstract*{This paper addresses the numerical solution of the Westervelt equation, which arises as one of the model equations in nonlinear acoustics. The problem is rewritten in a canonical form that allows the systematic discretization by Galerkin approximation in space and time. Exact energy preserving methods of formally arbitrary order are obtained and their efficient realization as well as the relation to other frequently used methods is discussed.}

\abstract{This paper addresses the numerical solution of the Westervelt equation, which arises as one of the model equations in nonlinear acoustics. The problem is rewritten in a canonical form that allows the systematic discretization by Galerkin approximation in space and time. Exact energy preserving methods of formally arbitrary order are obtained and their efficient realization as well as the relation to other frequently used methods is discussed.}

\section{Introduction}

The modeling of nonlinear effects arising in the presence of high intensity acoustic fields is one of the central subjects of nonlinear acoustics \cite{HamiltonBlackstock}. One widely used model in this area is the Westervelt equation \cite{Kaltenbacher09,Westervelt63} which in dimensionless form can be written as
\begin{equation}\label{eq:westervelt}
\partial_{tt} \psi - \Delta \psi = \alpha \Delta (\partial_t \psi) + \beta \partial_t (\partial_t \psi)^2. 
\end{equation}
The two terms on the right hand side, scaled with constants $\alpha,\beta \ge 0$, account for viscous and nonlinear effects of the medium and constitute the deviations from the standard linear wave equation. 
Equation \eqref{eq:westervelt} is written here in terms of the velocity potential $\psi$ which is related to the acoustic velocity and pressure variations by 
\begin{equation} \label{eq:vp}
v = -\nabla \psi \qquad \text{and} \qquad p = \partial_t \psi.  
\end{equation}
Similar to the linear wave equation, the Westervelt equation also encodes the principle of energy conservation. Using \eqref{eq:vp}, the dimensionless acoustic energy contained 
in a bounded domain $\Omega$ can be expressed in terms of the velocity potential by
\begin{equation} \label{eq:energy}
\mathcal{E}(\psi,\partial_t \psi) = \int_\Omega \tfrac{1}{2} |\nabla \psi|^2 + \left(\tfrac{1}{2} - \tfrac{2\beta}{3} \partial_t \psi\right) |\partial_t \psi|^2 dx  
\end{equation}
One can verify by elementary computations that solutions of \eqref{eq:westervelt}, when complemented, e.g., by homogeneous boundary conditions $\partial_n \psi = 0$, satisfy 
\begin{equation} \label{eq:identity}
\frac{d}{dt} \mathcal{E}(\psi,\partial_t \psi) = -\alpha \int_\Omega |\nabla (\partial_t \psi)|^2 dx. 
\end{equation}
This \emph{energy identity} states that in a closed system the acoustic energy is conserved exactly up to dissipation caused by viscous effects. For $\alpha \ge 0$, the Westervelt equation \eqref{eq:westervelt} thus models a passive system.
This property is of fundamental importance not only for the analysis of the problem \cite{Kaltenbacher09} but also for the accuracy and long-term stability of discretization schemes; see \cite{LeimkuhlerReich} and the references given there. 

Various discretization schemes for the linear wave equation can be extended to nonlinear acoustics. Among the most widely used approaches are the finite-difference-time-domain method \cite{Hallaj99,Karamalis10,Okita11}, finite-volume schemes \cite{Fagnan08,Velasco15}, and finite-element methods together with Newmark time-stepping \cite{Cohen02,Hoffelner01,Tsuchiya92}. 
To the best of our knowledge, none of the mentioned approaches is capable to exactly reproduce the energy identity \eqref{eq:identity} on the discrete level in the presence of nonlinearities. 

In this paper, we propose a systematic strategy for the high-order approximation of nonlinear acoustics in space and time which exactly satisfies an integral version of the energy identity \eqref{eq:identity} on the discrete level. Our approach utilizes the fact that the Westervelt equation \eqref{eq:westervelt} can be written as a generalized gradient system 
\begin{equation} \label{eq:gradient}
\mathcal{C}(u) \partial_t u = - \mathcal{H}'(u) 
\end{equation}
with $u=(\psi,\partial_t \psi)$ denoting the state and $\mathcal{H}(u)=\mathcal{E}(\psi,\partial_t\psi)$ the energy of the system.
The energy identity \eqref{eq:identity} is then a direct consequence of the particular structure of this system; see below. 
As illustrated in \cite{Egger18}, the structure-preserving discretization of \eqref{eq:gradient} can be obtained in a systematic manner by Galerkin approximation in space and time. 
For the space discretization, we utilize a finite-element approximation with mass-lumping. The time-integration resulting from our approach can be interpreted as a variant of particular Runge-Kutta methods and is strongly related to discrete gradient and average vector field collocation methods \cite{Gonzales96,HairerLubich14,McLachlan99}. 

The remainder of the manuscript is organized as follows: 
In Section~\ref{sec:canonical}, we rewrite the Westervelt equation \eqref{eq:westervelt} into the non-standard canonical form \eqref{eq:gradient}. 
Our discretization strategy is then introduced in Section~\ref{sec:disc}, and we show that the energy identity remains valid after discretization.
In Section~\ref{sec:impl}, we briefly discuss some details of the numerical realization and the connection to other discretization methods. In Section~\ref{sec:num}, we illustrate the exact energy-conservation in the absence of viscous effects for one-dimensional example.

\section{A canonical form of the Westervelt equation} \label{sec:canonical}

We introduce $p=\partial_t \psi$ as new variable and write $u=(\psi,p)$ and $\mathcal{H}(u)=\mathcal{E}(\psi,p)$.
The derivative $\mathcal{H}'(u)$ of the energy in direction $v=(\eta,q)$ is then given by
\begin{eqnarray*}
\langle \mathcal{H}'(u),v\rangle 
= \langle \mathcal{E}'(\psi,p), (\eta,q) \rangle 
= \int_\Omega \nabla \psi \cdot \nabla \eta + (1-2\beta p) p \cdot q \, dx.
\end{eqnarray*}
Using integration-by-parts for the first term under the integral and homogeneous boundary conditions $\partial_n \psi = 0$ on $\partial \Omega$, we can now formally represent the negative derivative of the energy functional as a two-component function
\begin{equation} \label{eq:derivative}
-\mathcal{H}'(u) = \left(\Delta \psi, -(1-2\beta p) p\right).
\end{equation}
In order to bring equation \eqref{eq:westervelt} into the canonical form \eqref{eq:gradient}, we should thus derive an equivalent first order system with right hand sides given by $-\mathcal{H}'(u)$. 
By elementary computations, one can verify the following statements. 
\begin{lemma} \label{lem:equiv}
The Westervelt equation \eqref{eq:westervelt} is equivalent to the system \begin{eqnarray}
(1-2\beta p) \partial_t p - \alpha \Delta \partial_t \psi &=& \Delta \psi. \label{eq:canonical1}\\
-(1-2\beta p) \partial_t \psi &=& - (1-2\beta p) p.  \label{eq:canonical2} \\
\notag 
\end{eqnarray}
\vspace*{-3em}
\end{lemma}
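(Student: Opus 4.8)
The plan is to establish equivalence between the scalar second-order Westervelt equation and the first-order system by showing each implies the other, treating the two equations of the system separately.

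First I would analyze equation \eqref{eq:canonical2}. Since the factor $(1-2\beta p)$ appears on both sides, and assuming $(1-2\beta p) \neq 0$ (which holds for sufficiently small $p$, a natural physical assumption near equilibrium), this equation reduces to $\partial_t \psi = p$. This simply recovers the defining relation for the new variable $p$ introduced at the start of the section. So \eqref{eq:canonical2} is essentially a tautology encoding $p = \partial_t \psi$, and I would note that the factor $(1-2\beta p)$ is deliberately retained on both sides so that the right-hand side matches the second component of $-\mathcal{H}'(u)$ from \eqref{eq:derivative}, which is the whole point of writing the system in the canonical gradient form.

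Next I would turn to equation \eqref{eq:canonical1}. Substituting $p = \partial_t \psi$ (from the first part) into this equation, I would rewrite $(1-2\beta p)\partial_t p$ as $(1 - 2\beta \partial_t\psi)\partial_{tt}\psi$. The key algebraic observation is that $2\beta \partial_t\psi \cdot \partial_{tt}\psi = \beta \partial_t\big((\partial_t\psi)^2\big)$, so that $(1-2\beta \partial_t\psi)\partial_{tt}\psi = \partial_{tt}\psi - \beta\partial_t\big((\partial_t\psi)^2\big)$. With this the left-hand side of \eqref{eq:canonical1} becomes $\partial_{tt}\psi - \beta\partial_t\big((\partial_t\psi)^2\big) - \alpha\Delta\partial_t\psi$, and equating it to $\Delta\psi$ and rearranging reproduces exactly \eqref{eq:westervelt}. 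Conversely, starting from \eqref{eq:westervelt} and introducing $p = \partial_t\psi$, the same chain-rule identity run backwards produces \eqref{eq:canonical1}, while \eqref{eq:canonical2} holds trivially by the definition of $p$. This establishes equivalence in both directions.

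The computation here is genuinely elementary, so I do not anticipate a serious obstacle; the only point requiring mild care is the factor $(1-2\beta p)$ in \eqref{eq:canonical2}, which one should not prematurely cancel, since the author evidently wants to preserve it to make the right-hand sides coincide with $-\mathcal{H}'(u)$ and thereby read off the structure operator $\mathcal{C}(u)$ in \eqref{eq:gradient}. I would therefore be careful to present \eqref{eq:canonical2} as an identity valid at the level of the factored form, noting that it is equivalent to $\partial_t\psi = p$ wherever $1-2\beta p \neq 0$, rather than dividing through. The whole argument reduces to recognizing the single chain-rule identity $2\beta\,\partial_t\psi\,\partial_{tt}\psi = \beta\,\partial_t\big((\partial_t\psi)^2\big)$ and applying it in both directions.
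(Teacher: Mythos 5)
Your proposal is correct and follows essentially the same route as the paper: both hinge on the single chain-rule identity $\beta\,\partial_t\bigl((\partial_t\psi)^2\bigr) = 2\beta\,(\partial_t\psi)\,\partial_{tt}\psi$ to pass between \eqref{eq:westervelt} and \eqref{eq:canonical1}, with \eqref{eq:canonical2} encoding $p=\partial_t\psi$. Your added caution about not cancelling the factor $(1-2\beta p)$ in \eqref{eq:canonical2}, and requiring $1-2\beta p\neq 0$ to recover $\partial_t\psi=p$ in the converse direction, is a slight refinement of the paper's terser argument rather than a different approach.
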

\begin{proof}
Differentiating the last term in \eqref{eq:westervelt} yields 
\begin{equation*}
\beta \partial_t (\partial_t \psi)^2 = 2 \beta (\partial_t \psi) \partial_{tt} \psi. 
\end{equation*}
Using this identiy and a slight rearrangment of terms, the Westervelt equation can thus be rewritten equivalently as 
\begin{equation*}
(1 - 2\beta \partial_t \psi) \partial_{tt} \psi - \alpha \Delta (\partial_t \psi) = \Delta \psi. 
\end{equation*}
By replacing $\partial_t \psi$ and $\partial_{tt}\psi$ in the first term by $p$ and $\partial_t p$, we already obtain \eqref{eq:canonical1}. 
The second equation \eqref{eq:canonical2} is an immediate concequence of the identity $p = \partial_t \psi$. 
\end{proof}
\begin{remark}
Abbreviating $u=(\psi,p)$ and $\mathcal{H}(u)=\mathcal{E}(\psi,p)$ as above, the system \eqref{eq:canonical1}--\eqref{eq:canonical2} can be seen to formally be in the canonical form \eqref{eq:gradient} with 
\begin{equation*}
\mathcal{C}(u)=\begin{pmatrix} -\alpha \Delta & (1-2\beta p) \\ -(1-2\beta p) & 0 \end{pmatrix}.  
\end{equation*}
The somewhat unconventional form of the system \eqref{eq:canonical1}--\eqref{eq:canonical2} is dictated by the underlying energy, whose derivative has to appear in the right hand side of the equations. 
\end{remark}
Our discretization will be based on the following weak formulation of \eqref{eq:canonical1}--\eqref{eq:canonical2}. 
\begin{lemma} \label{lem:weak}
Let $(\psi,p)$ denote a smooth solution of the system \eqref{eq:canonical1}--\eqref{eq:canonical2} on $\Omega$ with homogeneous boundary values $\partial_n \psi=0$ on $\partial\Omega$ for $a \le t \le b$. Then 
\begin{eqnarray}
\langle (1-2\beta p(t)) \partial_t p(t), \eta\rangle + \alpha \langle \nabla \partial_t \psi(t), \nabla \eta\rangle &=& -\langle \nabla \psi(t), \nabla \eta\rangle \label{eq:weak1}\\
-\langle (1-2\beta p(t)) \partial_t \psi(t), q \rangle &=& -\langle(1-2\beta p(t)) p(t), q\rangle  \label{eq:weak2}\\
\notag
\end{eqnarray}
\vspace*{-3em}

\noindent
for all test functions $\eta,q \in H^1(\Omega)$ and all $a \le t \le b$.
The bracket $\langle u,v\rangle = \int_\Omega u v \, dx$ is used here to denote the scalar product on $L^2(\Omega)$.
\end{lemma}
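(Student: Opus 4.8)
The plan is to derive each weak equation by testing the corresponding strong equation against an arbitrary test function and transferring the spatial second derivatives onto the test function via Green's identity. Since $(\psi,p)$ is assumed smooth on the relevant domain and time interval, all manipulations are legitimate, and the only point requiring genuine care is the treatment of the boundary terms produced by the integration by parts.

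First I would take the first canonical equation \eqref{eq:canonical1}, multiply it pointwise by $\eta \in H^1(\Omega)$, and integrate over $\Omega$. The term $(1-2\beta p)\partial_t p$ carries no spatial derivative and immediately yields $\langle (1-2\beta p)\partial_t p, \eta\rangle$. For the two Laplacian contributions I would apply the identity $\int_\Omega \Delta w\,\eta\,dx = -\langle \nabla w, \nabla\eta\rangle + \int_{\partial\Omega}(\partial_n w)\,\eta\,ds$, once with $w=\psi$ for the right-hand side term and once with $w=\partial_t\psi$ for the viscous term on the left. This produces the bilinear expressions $-\langle\nabla\psi,\nabla\eta\rangle$ and $\alpha\langle\nabla\partial_t\psi,\nabla\eta\rangle$ and leaves two boundary integrals to be disposed of.

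The step that deserves attention is verifying that both boundary integrals vanish. The contribution from $\Delta\psi$ gives $\int_{\partial\Omega}(\partial_n\psi)\,\eta\,ds$, which is zero directly by the homogeneous boundary condition $\partial_n\psi=0$. The contribution from $\Delta\partial_t\psi$ gives $\int_{\partial\Omega}(\partial_n\partial_t\psi)\,\eta\,ds$; here I would use that the boundary condition holds for all $t$ in $[a,b]$, so that interchanging the derivatives yields $\partial_n\partial_t\psi = \partial_t\partial_n\psi = 0$, which is justified by the assumed smoothness of the solution. Rearranging the surviving terms then delivers \eqref{eq:weak1}.

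The second weak equation is immediate: since \eqref{eq:canonical2} contains no spatial derivatives, multiplying by $q \in H^1(\Omega)$ and integrating over $\Omega$ directly reproduces \eqref{eq:weak2}, with no integration by parts and hence no boundary contributions to control. The main obstacle, such as it is, is thus confined to the justification of the time-differentiated boundary term in the first equation, which reduces to the interchange of the normal and time derivatives and is routine under the stated regularity.
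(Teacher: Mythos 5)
Your proposal is correct and follows essentially the same route as the paper: multiply \eqref{eq:canonical1}--\eqref{eq:canonical2} by test functions, integrate over $\Omega$, integrate by parts the Laplacian terms, and use the boundary condition $\partial_n\psi=0$ to kill the boundary integrals. Your explicit justification that the viscous boundary term vanishes via $\partial_n\partial_t\psi=\partial_t\partial_n\psi=0$ (using that the boundary condition holds for all $t\in[a,b]$) is a detail the paper leaves implicit, and it is handled correctly.
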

\begin{proof}
The two identities follow by multiplying \eqref{eq:canonical1}--\eqref{eq:canonical2} with appropriate test functions, integrating over $\Omega$, and integration-by-parts for the terms with the Laplacian. The boundary terms vanish due to the homogeneous boundary conditions. 
\end{proof}
We now show that the energy identity \eqref{eq:identity} follows directly from this weak formulation. 
\begin{lemma} \label{lem:energy}
Let $(\psi,p)$ denote a solution of the weak formulation \eqref{eq:weak1}--\eqref{eq:weak2}. Then
\begin{equation*}
\frac{d}{dt} \mathcal{E}(\psi(t),p(t)) = -\alpha \int_\Omega |\nabla (\partial_t \psi(t))|^2 dx. 
\end{equation*}
\end{lemma}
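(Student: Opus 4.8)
The plan is to differentiate the energy $\mathcal{E}(\psi(t),p(t))$ directly and then recognize its derivative as a pairing of $\mathcal{H}'(u)$ against $\partial_t u = (\partial_t \psi, \partial_t p)$, which the weak formulation controls. Concretely, I would first compute $\frac{d}{dt}\mathcal{E}(\psi,p)$ using the chain rule. Since $\mathcal{E}(\psi,p) = \int_\Omega \tfrac12|\nabla\psi|^2 + (\tfrac12 - \tfrac{2\beta}{3}p)|p|^2\,dx$, differentiating gives
\begin{equation*}
\frac{d}{dt}\mathcal{E}(\psi,p) = \int_\Omega \nabla\psi\cdot\nabla\partial_t\psi + (1-2\beta p)\,p\,\partial_t p\,dx.
\end{equation*}
The coefficient $(1-2\beta p)$ in the second term is exactly what makes this work: the cubic term $-\tfrac{2\beta}{3}p^3$ differentiates to $-2\beta p^2 \partial_t p$, which combines with $p\,\partial_t p$ from the quadratic term to produce $(1-2\beta p)p\,\partial_t p$. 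This is precisely $\langle \mathcal{H}'(u), \partial_t u\rangle$ with the representation \eqref{eq:derivative}, so the whole computation is just the chain rule applied to the energy.

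Next I would insert the weak equations to rewrite these two integral terms. The idea is to choose the test functions as the time derivatives of the solution components: in \eqref{eq:weak1} I take $\eta = \partial_t\psi(t)$, and in \eqref{eq:weak2} I take $q = \partial_t p(t)$. With $\eta = \partial_t\psi$, equation \eqref{eq:weak1} reads
\begin{equation*}
\langle (1-2\beta p)\partial_t p, \partial_t\psi\rangle + \alpha\langle \nabla\partial_t\psi,\nabla\partial_t\psi\rangle = -\langle\nabla\psi,\nabla\partial_t\psi\rangle,
\end{equation*}
which isolates the term $\langle\nabla\psi,\nabla\partial_t\psi\rangle$ appearing in the energy derivative. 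With $q=\partial_t p$, equation \eqref{eq:weak2} yields $\langle(1-2\beta p)\partial_t\psi,\partial_t p\rangle = \langle(1-2\beta p)p,\partial_t p\rangle$, which identifies the second term $\langle(1-2\beta p)p,\partial_t p\rangle$ in the energy derivative with $\langle(1-2\beta p)\partial_t\psi,\partial_t p\rangle$.

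Finally I would add the two resulting identities and observe the cancellation. Substituting both into $\frac{d}{dt}\mathcal{E}$, the term $-\langle\nabla\psi,\nabla\partial_t\psi\rangle$ from the first equation cancels against the $+\langle\nabla\psi,\nabla\partial_t\psi\rangle$ implicit in the energy expression, and the scalar-valued products $\langle(1-2\beta p)\partial_t p,\partial_t\psi\rangle$ and $\langle(1-2\beta p)\partial_t\psi,\partial_t p\rangle$ coincide (the bilinear form $\langle\cdot,\cdot\rangle$ is symmetric and the scalar weight $(1-2\beta p)$ is the same), so they too cancel. What survives is exactly $-\alpha\langle\nabla\partial_t\psi,\nabla\partial_t\psi\rangle = -\alpha\int_\Omega|\nabla\partial_t\psi|^2\,dx$, which is the claimed identity.

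The computation is entirely routine once the test functions are chosen correctly, so there is no serious obstacle; the one point requiring care is verifying that the chain-rule differentiation of the cubic energy term genuinely produces the factor $(1-2\beta p)$ that matches the weak form, since the $\tfrac{2}{3}$ coefficient in \eqref{eq:energy} is precisely calibrated for this. The only structural subtlety worth stating is that the matching of terms relies on the symmetry of the weight $(1-2\beta p)$ across the two weak equations, which is exactly the antisymmetric-off-diagonal structure of $\mathcal{C}(u)$ noted in the remark: the nonlinear coupling enters \eqref{eq:weak1} and \eqref{eq:weak2} with opposite signs but the same scalar factor, guaranteeing the cancellation.
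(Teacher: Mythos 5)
Your proposal is correct and follows essentially the same route as the paper: differentiate the energy via the chain rule, recognize the two resulting terms as the right-hand sides of the weak formulation tested with $\eta=\partial_t\psi$ and $q=\partial_t p$, and let the terms $\langle(1-2\beta p)\partial_t p,\partial_t\psi\rangle$ and $\langle(1-2\beta p)\partial_t\psi,\partial_t p\rangle$ cancel, leaving only the viscous term. The paper's proof is exactly this computation, down to the same choice of test functions and the same cancellation.
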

\begin{proof}
Formal differentiation of the energy yields 
\begin{eqnarray*}
\frac{d}{dt} \mathcal{E}(\psi,p) 
&=& \langle \mathcal{E}'(\psi,p), (\partial_t \psi,\partial_t p)\rangle \\
&=& \langle \nabla \psi, \nabla \partial_t \psi\rangle + \langle (1-2\beta p) p, \partial_t p\rangle,
\end{eqnarray*}
where we used the representation of the energy derivative derived above. 
The two terms correspond to the right hand sides of the weak formulation 
\eqref{eq:weak1}--\eqref{eq:weak2} with test functions $\eta=\partial_t \psi$ and $q=\partial_t p$. Using the weak formulation, we thus obtain
\begin{eqnarray*}
\frac{d}{dt} \mathcal{E}(\psi,p)
&=& -\langle (1-2\beta p) \partial_t p, \partial_t \psi\rangle - \alpha \langle \nabla \partial_t \psi, \nabla \partial_t \psi \rangle + \langle (1-2\beta p) \partial_t \psi, \partial_t p \rangle.
\end{eqnarray*}
Now the first and last term on the right hand side cancel out and the assertion follows by noting that $\langle \nabla \partial_t \psi,\nabla\partial_t \psi\rangle=\int_\Omega |\nabla\partial_t \psi|^2 dx $ by definition of the bracket. 
\end{proof}
\begin{remark}
The proof of the previous lemma reveals that the energy identity \eqref{eq:identity} is a direct consequence already of the particular structure of the weak formulation \eqref{eq:weak1}--\eqref{eq:weak2}. Since this form is preserved automatically under projection, one can obtain a structure preserving discretization by Galerkin approximation; see \cite{Egger18} for details. In the following section, we discuss a particular approximation based on finite elements.
\end{remark}

\section{Structure-preserving discretization} \label{sec:disc}

Let $\mathcal{T}_h = \{K\}$ denote a mesh, i.e., a geometrically conforming and uniformly shape-regular simplicial partition, of the domain $\Omega$. 
We write $h_K$ and $h=\max_K h_K$ for the local and global mesh size. We further denote by 
\begin{equation*}
V_h = \{ v \in H^1(\Omega) : v|_K  \in P_k(K) \quad \forall K \in T_h\}
\end{equation*}
the standard finite element space consisting of continuous piecewise polynomial functions of degree $\le k$. 
Let $I_\tau = \{0 = t^0 < t^1 < \ldots < t^N = T\}$ denote a partition of the time interval $[0,T]$ into elements $[t^{n-1},t^n]$ of size $\tau_n = t^n-t^{n-1}$ and, as before, write $\tau=\max_n \tau_n$ for the global time step size. We denote by
\begin{equation*}
P_q(I_\tau;X) = \{v :  v|_{[t^{n-1},t^n]} \in P_q([t^{n-1},t^n];X)\} 
\end{equation*}
the space of piecewise polynomial functions in time of degree $\le q$ with values in $X$.
As approximation for the Westervelt equation \eqref{eq:westervelt} we now consider the following inexact Galerkin-Petrov Galerkin approximation of the weak formulation \eqref{eq:weak1}--\eqref{eq:weak2}. 
\begin{problem} \label{prob:1}
Find $\psi_h,p_h \in P_{q}(I_\tau;V_h) \cap H^1([0;T];V_h)$ such that 
$\psi_h(0)=\psi_{h,0}$, $p_h(0)=p_{h,0}$, for given initial values $\psi_{h,0},p_{h,0} \in V_h$,
and such that
\begin{eqnarray*}
&&\int_{t^m}^{t^n} \langle (1-2\beta p_h) \partial_t p_h, \widetilde \eta_h\rangle_h
- \alpha \langle \nabla \partial_t \psi_h, \nabla \widetilde \eta_h\rangle \, dt 
= -\int_{t^m}^{t^n} \langle \nabla \psi_h, \nabla \widetilde \eta_h\rangle \, dt \label{eq:weak1h}\\
&&-\int_{t^m}^{t^n} \langle (1-2\beta p_h) \partial_t \psi_h, \widetilde q_h \rangle_h \, dt 
= -\int_{t^m}^{t^n} \langle(1-2\beta p_h) p_h, \widetilde q_h\rangle_h \, dt. \label{eq:weak2h}\\
\notag
\end{eqnarray*}
\vspace*{-3em}

\noindent
for all $0 \le t^m \le t^n \le T$ and all $\widetilde \eta_h, \widetilde q_h \in P_{q-1}(I_\tau;V_h)$. Here $\langle u,v\rangle_h$ is a symmetric positive definite approximation for $\langle u,v \rangle$ obtained by numerical integration.
\end{problem}

Due to the inexact realization of the scalar product in some of the terms, we have to modify the discrete energy accordingly and define 
\begin{equation*}
\mathcal{E}_h(\psi_h,p_h) = \langle \tfrac{1}{2} \nabla \psi_h, \nabla \psi_h \rangle + \langle (\tfrac{1}{2} - \tfrac{2\beta}{3} p_h) p_h,p_h \rangle_h.
\end{equation*}
Note that $\mathcal{E}_h(\psi_h,p_h) = \mathcal{E}(\psi_h,p_h)$ when the scalar products are computed exactly, so this defines a natural modification of the energy on the discrete level. With similar arguments as used in Lemma~\ref{lem:energy}, we now obtain the following discrete energy identity. 
\begin{lemma} \label{lem:energyh}
Let $(\psi_h,p_h)$ denote a solution of Problem~\ref{prob:1}. Then 
one has 
\begin{equation*}
\mathcal{E}_h(\psi_h(t^n),p_h(t^n)) = \mathcal{E}_h(\phi_h(t^m),p_h(t^m))) - \alpha\int_{t^m}^{t^n} \int_\Omega |\nabla \partial_t \psi_h(s)|^2 dx \, ds, 
\end{equation*}
for all $0 \le t^m \le t^n \le T$, which is the discrete equivalent of the integral form of \eqref{eq:identity}. 
\end{lemma}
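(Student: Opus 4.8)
The plan is to mirror the proof of Lemma~\ref{lem:energy} on the discrete level, with two adjustments: replacing the pointwise-in-time differentiation by its integrated counterpart over $[t^m,t^n]$, and accounting for the numerically integrated bracket $\langle\cdot,\cdot\rangle_h$ in the nonlinear energy term. The structural observation that makes everything work is that for $\psi_h,p_h\in P_q(I_\tau;V_h)\cap H^1([0,T];V_h)$ the time derivatives $\partial_t\psi_h$ and $\partial_t p_h$ lie in $P_{q-1}(I_\tau;V_h)$ and are hence admissible test functions in Problem~\ref{prob:1}; this degree drop is precisely the reason for the Petrov--Galerkin choice of the test space.

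First I would differentiate the discrete energy $\mathcal{E}_h(\psi_h(t),p_h(t))$ in time. The quadratic gradient contribution is bilinear and differentiates exactly as in the continuous case to give $\langle\nabla\psi_h,\nabla\partial_t\psi_h\rangle$. For the nonlinear contribution I would use that $\langle\cdot,\cdot\rangle_h$ is a quadrature (mass-lumping) product, so it evaluates its arguments pointwise at the quadrature nodes; the cubic density $(\tfrac12-\tfrac{2\beta}{3}p_h)p_h^2$ is then differentiated node by node, and since its derivative with respect to $p_h$ equals $(1-2\beta p_h)p_h$, one obtains the exact discrete chain rule
\[
\frac{d}{dt}\langle(\tfrac12-\tfrac{2\beta}{3}p_h)p_h,p_h\rangle_h=\langle(1-2\beta p_h)p_h,\partial_t p_h\rangle_h .
\]
Together this yields $\frac{d}{dt}\mathcal{E}_h=\langle\nabla\psi_h,\nabla\partial_t\psi_h\rangle+\langle(1-2\beta p_h)p_h,\partial_t p_h\rangle_h$, the discrete analogue of the first display in the proof of Lemma~\ref{lem:energy}.

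Next I would integrate this identity over $[t^m,t^n]$ and recognize the two resulting terms as the right-hand sides of the two equations of Problem~\ref{prob:1} tested with $\widetilde\eta_h=\partial_t\psi_h$ and $\widetilde q_h=\partial_t p_h$. Substituting the left-hand sides replaces $\int_{t^m}^{t^n}\langle\nabla\psi_h,\nabla\partial_t\psi_h\rangle\,dt$ by $-\int_{t^m}^{t^n}\langle(1-2\beta p_h)\partial_t p_h,\partial_t\psi_h\rangle_h\,dt-\alpha\int_{t^m}^{t^n}\langle\nabla\partial_t\psi_h,\nabla\partial_t\psi_h\rangle\,dt$, and $\int_{t^m}^{t^n}\langle(1-2\beta p_h)p_h,\partial_t p_h\rangle_h\,dt$ by $\int_{t^m}^{t^n}\langle(1-2\beta p_h)\partial_t\psi_h,\partial_t p_h\rangle_h\,dt$. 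Because $\langle\cdot,\cdot\rangle_h$ is symmetric and $(1-2\beta p_h)$ acts as a pointwise weight, the two nonlinear mixed terms coincide and cancel, exactly as in the continuous proof, leaving only $-\alpha\int_{t^m}^{t^n}\langle\nabla\partial_t\psi_h,\nabla\partial_t\psi_h\rangle\,dt=-\alpha\int_{t^m}^{t^n}\int_\Omega|\nabla\partial_t\psi_h|^2\,dx\,dt$, which is the claimed identity.

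The main obstacle I anticipate is the exactness of the discrete chain rule in the presence of the inexact quadrature: this is not automatic for an arbitrary approximate inner product but relies on $\langle\cdot,\cdot\rangle_h$ acting pointwise at the quadrature nodes, and on the modified energy $\mathcal{E}_h$ having been defined with the coefficient $\tfrac{2\beta}{3}$ precisely so that the node-wise derivative of its density reproduces $(1-2\beta p_h)p_h$. A secondary point to verify carefully is that the whole manipulation is legitimate only because $\partial_t\psi_h,\partial_t p_h\in P_{q-1}(I_\tau;V_h)$; this admissibility, together with the symmetry of $\langle\cdot,\cdot\rangle_h$ underlying the cancellation, constitutes the essential content of the argument.
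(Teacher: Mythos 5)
Your proof is correct and follows essentially the same route as the paper's: the fundamental theorem of calculus applied to $\mathcal{E}_h$, identification of the resulting integrand with the right-hand sides of Problem~\ref{prob:1} tested with $\widetilde\eta_h=\partial_t\psi_h$ and $\widetilde q_h=\partial_t p_h$, and cancellation of the two mixed nonlinear terms, leaving only the $-\alpha$ dissipation term. In fact you make explicit two points the paper leaves implicit --- that the exact discrete chain rule $\frac{d}{dt}\langle(\tfrac12-\tfrac{2\beta}{3}p_h)p_h,p_h\rangle_h=\langle(1-2\beta p_h)p_h,\partial_t p_h\rangle_h$ holds because the quadrature acts nodewise, and that $\partial_t\psi_h,\partial_t p_h\in P_{q-1}(I_\tau;V_h)$ are admissible test functions --- which strengthens rather than alters the argument.
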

\begin{proof}
Let $u^n=u(t^n)$ denote the value of a function a time $t^n$. Then 
by the fundamental theorem of calculus and the expression of the energy derivative, we obtain
\begin{eqnarray*}
\mathcal{E}_h(\psi_h^n,p_h^n) -\mathcal{E}_h(\phi_h^m,p_h^m)   
&=&\int_{t^m}^{t^n} \frac{d}{dt} \mathcal{E}_h(\psi_h,p_h) dt \\
&=&\int_{t^m}^{t^n} \langle \nabla \psi_h,\nabla \partial_t \psi_h\rangle + \langle (1-2\beta p_h) p_h,\partial_t p_h\rangle_h \, dt.   
\end{eqnarray*}
The two terms in the second line correspond to the negative of the right hand side in Problem~\ref{prob:1} with test functions $\widetilde \eta_h = \partial_t \psi_h$ and $\widetilde q_h = \partial_t p_h$,
which directly leads to 
\begin{eqnarray*}
\mathcal{E}_h(\psi_h^n,p_h^n) -\mathcal{E}_h(\phi_h^m,p_h^m) 
&=& -\alpha\int_{t^m}^{t^n} \langle \nabla \partial_t \psi_h,\nabla \partial_t \psi_h\rangle \, dt.
\end{eqnarray*}
The assertion of the lemma now follows from the definition of the bracket $\langle \cdot,\cdot\rangle$.
\end{proof}
\begin{remark}
Let us note that, exactly in the same way as in the previous section, the discrete energy identity is a direct consequence of the particular structure of the weak formulation used in the definition of Problem~\ref{prob:1}, which adequately accounts for the underlying nonlinear discrete energy. 
\end{remark}

\section{Remarks on the implementation} \label{sec:impl}

Before we proceed to numerical tests, let us briefly comment on the  implementation of the method resulting from Problem~1. For ease of presentation, we consider  piecewise linear approximations in space and time, i.e., $k=q=1$. 
We choose the standard nodal basis for the finite elements in space and utilize the vertex rule for numerical integration in $\langle u,v\rangle_h$,
which gives rise to diagonal matrices associated with these integrals.
The system to be solved on every time step then takes the form 
{\footnotesize\small
\begin{eqnarray*}
&&D(1-2\beta p^{n+1/2}) \frac{p^{n+1}-p^n}{\tau} + \alpha K(1) \frac{\psi^{n+1}-\psi^n}{\tau} 
= -K(1) \psi^{n+1/2} \\
&&-D(1-2\beta p^{n+1/2}) \frac{\psi^{n+1}-\psi^n}{\tau} 
= -D(1-2\beta p^{n+1/2}) p^{n+1/2} - \tfrac{\beta}{6} D(p^{n+1}-p^n) (p^{n+1}-p^n)
\end{eqnarray*}}%
\noindent
with $u^{n+1/2}=\frac{1}{2}(u^n+u^{n+1})$ denoting the value at the midpoint of the time interval. Furthermore, the matrices $D(a)$, $K(b)$ represent the integrals $\langle a u,v\rangle_h$ and $\langle b \nabla u, \nabla v\rangle$.

\begin{remark}
Apart from the last term in the second equation, the time-step iteration amounts to the Gau\ss-Runge-Kutta method with $s=1$ stages and could also be interpreted as an inexact realization of the Lobatto-IIIA method with $s=2$ stages. 
Similar statements can be made for and order $q \ge 1$ in Problem~\ref{prob:1}.
Using an inexact computation of the time integrals arising on the left-hand side in Problem~\ref{prob:1} leads to the \emph{average vector field collocation methods} discussed in \cite{HairerLubichWanner}.
The inexact realization $\langle \cdot,\cdot \rangle_h$ of the scalar product in space allows to utilize mass-lumping strategies which facilitates the handling of the nonlinear terms in the numerical realization, since 
they only appear in the diagonal matrices $D(\cdot)$.
Using the considerations of \cite{Cohen02,Geevers18}, mass lumping can be achieved in principle for any order of approximation $k \ge 1$ in space. 
\end{remark}

\section{Numerical tests} \label{sec:num}

For illustration of our results, we now report about numerical tests for a simple example. We consider the Westervelt equation \eqref{eq:westervelt} on the domain $\Omega=(0,16)$ with homogeneous boundary conditions $\partial_x \psi=0$ at $\partial\Omega$. The model parameters are set to $\alpha=0$ and $\beta=0.3$, i.e., we consider a problem without dissipation. 
By Lemma~\ref{lem:energy}, the acoustic energy of the system is then preserved 
for all times. 
As initial conditions for our computational tests, we choose $\psi_0(x) = 0$ and $p_0(x)=e^{-0.2 x^2}$. Some snapshots of the numerical solution obtained with the method of Problem~\ref{prob:1} with polynomial orders $k=q=2$ are depicted in Figure~\ref{fig:wave}. 
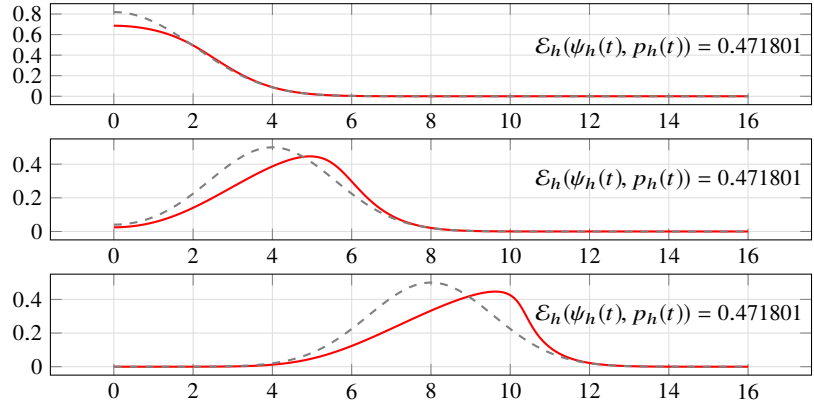
\begin{figure}[h]
\centering
\begin{tikzpicture}
\footnotesize
\begin{axis}[
width = 0.7\textwidth,
height = 0.25\textwidth,
grid=both, 
minor grid style={gray!25}, 
major grid style={gray!25},
no marks,
width=\linewidth]
\addplot[red, thick] table[x={t}, y={solT65}] {data.txt};
\addplot[gray, dashed, thick] table[x={t}, y={linsolT65}] {data.txt};
\node[above] at (14,0.3) {$\mathcal{E}_h(\psi_h(t),p_h(t)) = 0.471801$};
\end{axis}
\end{tikzpicture}

\begin{tikzpicture}
\footnotesize
\begin{axis}[
width = 0.7\textwidth,
height = 0.25\textwidth,
grid=both, 
minor grid style={gray!25}, 
major grid style={gray!25},
no marks,
width=\linewidth]
\addplot[red, thick] table[x={t}, y={solT257}] {data.txt};
\addplot[gray, dashed, thick] table[x={t}, y={linsolT257}] {data.txt};
\node[above] at (14,0.2) {$\mathcal{E}_h(\psi_h(t),p_h(t)) = 0.471801$};
\end{axis}
\end{tikzpicture}

\begin{tikzpicture}
\footnotesize
\begin{axis}[
width = 0.7\textwidth,
height = 0.25\textwidth,
grid=both, 
minor grid style={gray!25}, 
major grid style={gray!25},
no marks,
width=\linewidth]
\addplot[red, thick] table[x={t}, y={solT513}] {data.txt};
\addplot[gray, dashed, thick] table[x={t}, y={linsolT513}] {data.txt};
\node[above] at (14,0.2) {$\mathcal{E}_h(\psi_h(t),p_h(t)) = 0.471801$};
\end{axis}
\end{tikzpicture}
\caption{Solution $p_h(t)$ of the Westervelt equation with $\alpha=0, \beta=0.3$ (red) and the linear wave equation with $\alpha=\beta=0$ (black dashed) at time steps $t=1$, $t=4$, and $t=8$. 
}
\label{fig:wave}
\end{figure}
In comparison to the solution of the linear wave equation, which corresponds to \eqref{eq:westervelt} with $\alpha=\beta=0$, the presence of the nonlinear terms ($\beta=0.3$) leads to a steepening of the wave front. In the absence of viscous damping, this leads to the formation of a shock inn the long run.  
For the linear wave equation ($\beta=0$), our method coincides with the Lobatto-IIIA method and the energy is preserved exactly for both schemes. 
While the proposed method still yields exact energy preservation also in the nonlinear case ($\beta > 0$), the Lobatto-IIIA method fails to do so. Similar statements also hold for the Gau\ss-Runge-Kutta and the Newmark scheme.

From the usual error analysis of Galerkin methods \cite{Akrivis11}, we expect that the error 
$$
\mbox{err} = \max_{0 \le t_n \le T} \|p(t^n) - p_h^n\|_{h}
$$
of the method resulting from Problem~\ref{prob:1} with approximation orders $q=k$ converges with order $p=k+1$ in space and time. In Table~\ref{tab:error}, we report about the corresponding convergence rates observed in our numerical tests.
\begin{table}[h]
\centering
\begin{tabular}{|r|c|c||r|c|c|}
\hline
$h=\tau $ & err $ \times 10^{-3}$ & eoc & $h=\tau $ & err$ \times 10^{-5}$ & eoc\\
\hline
0.25    & $1.7758$ &  -   & 0.25     & $2.4964$ & -    \\
0.125   & $0.1841$ & 3.27 & 0.125    & $0.1565$ & 3.99 \\
0.0625  & $0.0131$ & 3.81 & 0.0625   & $0.0098$ & 4.00 \\
0.03125 & $0.0008$ & 4.03 & 0.03125  & $0.0006$ & 4.03 \\
\hline
\end{tabular}
\caption{Convergence rates for discrete error in the pressure at gridpoints for the nonlinear wave equation $\beta = 0.3$ (left) and the linear wave equation $\beta = 0$ (right) for comparison. }
\label{tab:error}
\end{table}
For our numerical tests, we use polynomial orders $k=q=2$ in space and time, and thus would expect third order convergence. As can be seen in Table~\ref{tab:error}, we here even observe fourth order convergence on grid-points.
This kind of super-convergence on uniform grids can be observed also for finite-difference approximations of linear wave equations \cite{CohenJoly96}.

\begin{acknowledgement}
The authors are grateful for support by the German Research Foundation (DFG) via grants TRR~146 C3, TRR~154 C4, Eg-331/1-1, and through the ``Center for Computational Engineering'' at TU Darmstadt.
\end{acknowledgement}

\end{document}